\title{The Derived Category of the Intersection of Four Quadrics}
\author{Nicolas Addington}
\newtheorem*{theorem*}{Theorem}
\newtheorem{lemma}{Lemma}[section]
\numberwithin{equation}{section}
\renewcommand{\AA}{\mathcal A}
\newcommand{\BB}{\mathcal B}
\newcommand{\CC}{\mathbb C}
\newcommand{\DD}{\mathcal D}
\newcommand{\EE}{\mathcal E}
\newcommand{\HH}{\mathcal H}
\newcommand{\II}{\mathcal I}
\newcommand{\JJ}{\mathcal J}
\newcommand{\MM}{\mathcal M}
\newcommand{\Mt}{{\tilde{\mathcal M}}}
\newcommand{\OO}{\mathcal O}
\newcommand{\PP}{\mathbb P}
\renewcommand{\SS}{\mathcal S}
\newcommand{\TT}{\mathcal T}
\newcommand{\ZZ}{\mathbb Z}
\renewcommand{\phi}{\varphi}
\newcommand{\Wedge}{{\textstyle\bigwedge}}
\newcommand{\hook}{\mathbin{\rule[.2ex]{.4em}{.03em}\rule[.2ex]{.03em}{.4em}}}
\newcommand{\ev}{\text{ev}}
\newcommand{\odd}{\text{odd}}
\newcommand{\sm}{\text{sm}}
\newcommand{\sing}{\text{sing}}
\newcommand{\Caldararu}{C\u{a}l\-d\u{a}\-ra\-ru}
\newcommand\Cl{{C\ell}}
\DeclareMathOperator{\Gr}{Gr}
\DeclareMathOperator{\rank}{rank}
\DeclareMathOperator{\Hom}{Hom}
\DeclareMathOperator{\Ext}{Ext}
\DeclareMathOperator{\PGL}{PGL}
\DeclareMathOperator{\End}{End}
\DeclareMathOperator{\corank}{corank}
\begin{document}

\maketitle

\begin{abstract}
The derived category of a general complete intersection of four quadrics in $\PP^{2n-1}$ has a semi-orthogonal decomposition \linebreak $\langle \OO(-2n+9), \dotsc, \OO(-1), \OO, \DD \rangle$, where $\DD$ is the derived category of twisted sheaves on a certain non-algebraic complex 3-fold coming from a moduli problem.  In particular, when $n=4$ we obtain a (twisted) derived equivalence of Calabi-Yau 3-folds predicted by Gross.  This differs from Kuznetsov's result \cite{kuznetsov_quadrics} in that our construction is geometric and avoids non-commutative varieties.
\end{abstract}

\section{Introduction}

We will be interested in the following objects:
\begin{itemize}
\item $V$, a $2n$-dimensional complex vector space, 
\item $\Phi = \PP \mathop{\rm Sym}^2 V^*$, the space of quadrics in $\PP V$,
\item $X$, a general complete intersection of 2, 3, or 4 quadrics in $\PP V$,
\item $L$, the line, plane, or 3-plane that those quadrics span in $\Phi$, and
\item $\MM$, the double cover of $L$ branched over the hypersurface of singular quadrics.
\end{itemize}
We will see that $\MM$ is a moduli space of vector bundles on $X$ and that their derived categories are related.

Let us take a moment to describe $\Phi$.  Over $\CC$, any two quadrics of the same rank are isomorphic.  A quadric of full rank is smooth and $2n-2$-dimensional.  A quadric of corank 1 is a cone from a point to a smooth $2n-3$-dimensional quadric.  A quadric of corank 2 is a cone from a line to a smooth $2n-4$-dimensional quadric, and so on.  $\Phi$ has a stratification $\Phi \supset \Delta_1 \supset \Delta_2 \supset \Delta_3 \supset \dotsb$, where $\Delta_c$ is the space of quadrics of corank at least $c$.  The singular locus of $\Delta_c$ is $\Delta_{c+1}$.  $\Delta_1$ is a hypersurface of degree $2n$, $\Delta_2$ is codimension 3, and $\Delta_3$ is codimension 6.  I imagine the whole situation as looking like this:
\begin{center}
\psset{unit=4pt,dash=2pt 2pt}
\begin{pspicture}(75,37)
\pscustom{
  \newpath
  \moveto(38,5)
  \rcurveto(-3,1)(-6,2)(-14,1) 
  \rcurveto(-8,-1)(-13,-4)(-17,-6)
  \rcurveto(-3,0)(-1,8)(0,11)
  \rcurveto(-2,3)(-4,5)(-7,7)
  \rcurveto(4,2)(9,5)(17,6)
  \rcurveto(8,1)(11,0)(14,-1)
  \rcurveto(3,-2)(5,-4)(7,-7)
  \rcurveto(2,-3)(3,-11)(0,-11)
  \rcurveto(-3,0)(-1,8)(0,11)
  \rcurveto(2,6)(8,11)(10,13)
  \rcurveto(2,2)(3,4)(3,6)
  \rcurveto(-3,1)(-6,2)(-14,1)
  \rcurveto(-8,-1)(-13,-4)(-17,-6)
  \rcurveto(0,-2)(-1,-4)(-3,-6)
  \moveto(7,11)
  \rcurveto(4,2)(9,5)(17,6)
  \rcurveto(8,1)(11,0)(14,-1) 
}
\pscustom[linestyle=dashed]{
  \newpath
  \moveto(17,24)
  \rcurveto(-2,-2)(-8,-7)(-10,-13)
  \rcurveto(2,-3)(3,-11)(0,-11)
}
\pscircle[fillstyle=solid,fillcolor=black](53,30){1.5pt}
\psline(53,30)(56,30)
\uput[0](56,30){generically}
\pscustom{
  \newpath
  \moveto(70,33)
  \rcurveto(0,1)(3,1)(3,1)
  \rcurveto(0,0)(3,0)(3,-1)
  \rcurveto(0,-1)(-3,-1)(-3,-1)
  \rcurveto(0,0)(-3,0)(-3,1)
  \rcurveto(1,-2)(1,-2)(1,-3)
  \rcurveto(0,-1)(0,-1)(-1,-3)
  \rcurveto(0,-1)(3,-1)(3,-1)
  \rcurveto(0,0)(3,0)(3,1)
  \rcurveto(-1,2)(-1,2)(-1,3)
  \rcurveto(0,1)(0,1)(1,3)
}
\pscustom[linestyle=dashed]{
  \newpath
  \moveto(70,27)
  \rcurveto(0,1)(3,1)(3,1)
  \rcurveto(0,0)(3,0)(3,-1)
}
\psline(44,23)(53,22)
\uput[-10](53,22){$\Delta_1$:}
\pscustom{
  \newpath
  \moveto(59,24)
  \rcurveto(0,1)(3,1)(3,1)
  \rcurveto(0,0)(3,0)(3,-1)
  \rcurveto(0,-1)(-3,-1)(-3,-1)
  \rcurveto(0,0)(-3,0)(-3,1)
  \rlineto(6,-6)
  \rcurveto(0,-1)(-3,-1)(-3,-1)
  \rcurveto(0,0)(-3,0)(-3,1)
  \rlineto(6,6)
}
\pscustom[linestyle=dashed]{
  \newpath
  \moveto(59,18)
  \rcurveto(0,1)(3,1)(3,1)
  \rcurveto(0,0)(3,0)(3,-1)
}
\psline(39.5,15.5)(47,13)
\rput(50,12){$\Delta_2$:}
\pscustom{
  \newpath
  \moveto(62,7)
  \rlineto(4,-1)
  \rlineto(-6,6)
  \rlineto(-8,2)
  \rlineto(3,-3)
  \rlineto(8,-2)
  \rlineto(2,4)
  \rlineto(-8,2)
  \rlineto(-1,-2)
  \moveto(55,11)
  \rlineto(-2,-4)
  \rlineto(8,-2)
  \rlineto(2,4)
}
\pscustom[linestyle=dashed]{
  \newpath
  \moveto(56,13)
  \rlineto(-1,-2)
  \rlineto(3,-3)
  \rlineto(8,-2)
}
\end{pspicture}
\end{center}
but note that $\Delta_2$ is really codimension 3 and $\Delta_3$ is not pictured.

In analogy with Be{\u\i}linson's semi-orthogonal decomposition of the derived category of $\PP V$ \cite{beilinson}
\[ D(\PP V) = \langle \OO_{\PP V}(-2n+1), \dotsc, \OO_{\PP V}(-1), \OO_{\PP V} \rangle, \]
Kapranov \cite{kapranov_square} described the derived category of a smooth, even-dimensional quadric $Q$:
\[ D(Q) = \langle \OO_X(-2n+3), \dotsc, \OO_X(-1), \OO_X, S_+, S_- \rangle. \]	
Here $S_\pm$ are the two ``spinor bundles'' on $Q$, so called because they can be constructed using Cartan's variety of pure spinors.  On $Q^2 \cong \PP^1 \times \PP^1$, they are $\OO(1,0)$ and $\OO(0,1)$, and on $Q^4 \cong \Gr(2,4)$, they are the quotient bundle and the dual of the tautological bundle.  We will discuss them at length in \S2.  Our reference on derived categories, semi-orthogonal decompositions, etc.\ is Huybrechts' book \cite{huybrechts}.

The first to associate a hyperelliptic curve to a pencil of quadrics was Weil \cite{weil}, who computed the zeta function of $X$ from that of $\MM$ to give evidence for his conjectures.  In his thesis \cite{reid}, Reid showed that the variety of $\PP^{n-2}$s on $X$ is isomorphic to the Jacobian of $\MM$ and to the intermediate Jacobian of $X$.  In a series of papers beginning with \cite{desale_ramanan}, Desale and Ramanan described various moduli spaces of bundles on $\MM$ in terms of varieties of linear spaces on $X$.  Bondal and Orlov \cite{bo_semiorth} gave a categorical explanation of this: viewing $\MM$ as the moduli space of spinor bundles on $X$, they used the universal bundle as the kernel of a Fourier-Mukai transform to embed $D(\MM)$ in $D(X)$ and showed that
\[ D(X) = \langle \OO_X(-2n+5), \dotsc, \OO_X(-1), \OO_X, D(\MM) \rangle. \]
Setting $n=2$, so $X$ and $\MM$ are elliptic curves, we recover Mukai's original example \cite{mukai_elliptic} of a derived equivalence.

Tyurin \cite{tyurin} studied intermediate Jacobian of the intersection of three odd-dimensional quadrics and proved a Torelli theorem.  Desale \cite{desale_net} generalized her and Ramanan's work on two quadrics to three.  When $n=3$, so $X$ and $\MM$ are K3 surfaces, Mukai \cite{mukai_K3} showed that $\MM$ is the moduli space of spinor bundles on $X$; this is true for $n>3$ as well.  But now the moduli problem is not fine, and we can only find a twisted pseudo-universal bundle, twisted by some Brauer class $\alpha \in H^2(\MM, \OO_\MM^*)$.  Then:
\[ D(X) = \langle \OO_X(-2n+7), \dotsc, \OO_X(-1), \OO_X, D(\MM, \alpha^{-1}) \rangle. \]
Our reference on twisted sheaves is \Caldararu's thesis \cite{andrei_thesis}, which also discusses the equivalence obtained when $n=3$.

For four or more quadrics, $L$ cannot avoid the corank 2 locus $\Delta_2$, so $L \cap \Delta_1$ must be singular, so $\MM$ must be singular.  In analogy with Ber\u{s}te{\u\i}n--Gelfan'd--Gelfan'd's description of $D(\PP V)$ \cite{bgg}, Kapranov \cite{kapranov_bgg} described of $D(X)$ as a quotient of the derived category of modules over a generalized Clifford algebra, which we will discuss in \S3.  Bondal and Orlov \cite{bo_sheaves} equipped $\MM$ with a related sheaf of algebas $\BB$, viewed it as a non-commutative resolution of singularities, and stated that
\[ D(X) = \langle \OO_X(-2n+2m-1), \dotsc, \OO_X(-1), \OO_X, D(\BB\text{-mod}) \rangle \]
when $n \ge m$, where $m$ is the number of quadrics.  Kuznetsov \cite{kuznetsov_quadrics} proved this and more using his homological projective duality.  For three quadrics, $\BB$ is just a sheaf of Azumaya algebras, so this is equivalent to what we said above about twisted sheaves, but in general $\BB$ is scarier.

We will take a more geometric approach to the intersection of four quadrics, emphasizing the moduli problem.  In \S2 we describe the vector bundles in question, which come from matrix factorizations of the quadrics.  The smooth points of $\MM$ correspond to stable bundles, but $\MM$ also has ordinary double points; each one corresponds to two lines of semi-stable bundles, all S-equivalent.  We can cut out an ODP and replace it by a line in two ways, related by a flop, although in general we cannot hope to do this algebraically.  In \S3 we construct this small resolution of singularities $\Mt \to \MM$ and an $\alpha$-twisted pseudo-universal bundle on $\Mt$.  In \S4 we show that when $n \ge 4$, so $X$ is Fano or Calabi-Yau, this embeds $D(\Mt, \alpha^{-1})$ in $D(X)$.  In \S5 we prove:
\begin{theorem*}
$D(X) = \langle \OO_X(-2n+9), \dotsc, \OO_X(-1), \OO_X, D(\Mt, \alpha^{-1}) \rangle$.
\end{theorem*}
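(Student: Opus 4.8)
The plan is to prove the two halves of the statement separately. Semi-orthogonality of the pieces in the stated order is a cohomology computation; generation of $D(X)$ is the substantial half, and for it I will lean on the Clifford-algebra description of $D(X)$ set up in \S3.

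Exceptionality of the collection $\OO_X(-2n+9),\dotsc,\OO_X$ amounts to the vanishing $H^\bullet(X,\OO_X(-c))=0$ for $1\le c\le 2n-9$, which falls out of the Koszul resolution of $\OO_X$ on $\PP V=\PP^{2n-1}$ together with the cohomology of line bundles on projective space; since $c=2n-8$ would already meet $H^{2n-1}$, the collection is as long as possible, which is why it begins at $-2n+9$. It then remains to check that $\Hom^\bullet(\Psi\mathcal G,\OO_X(a))=0$ for every $\mathcal G\in D(\Mt,\alpha^{-1})$ and every $a\in\{-2n+9,\dotsc,0\}$, where $\Psi$ is the Fourier--Mukai functor of \S4 with kernel the $\alpha$-twisted pseudo-universal bundle $\EE$ on $X\times\Mt$. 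By adjunction this is the vanishing of the right adjoint $\Psi^{!}$ on each $\OO_X(a)$; as the twist lives only in the $\Mt$ direction, the fibers $\EE_s$ are honest sheaves on $X$, so base change turns it into $H^\bullet(X,\EE_s^\vee(a-2n+8))=0$ for all $s\in\Mt$ (using $\omega_X=\OO_X(-2n+8)$). For $s$ over a point where $Q_\ell$ is smooth, $\EE_s$ is a spinor bundle of $Q_\ell$, normalized as in \S2, restricted to $X$; resolving $\OO_X$ by the Koszul complex of the three quadric sections cutting $X$ out of $Q_\ell$ and feeding in Kapranov's decomposition of $D(Q_\ell)$ yields the vanishing, and the normalization of \S2 is exactly the one that makes the resulting interval of twists coincide with $\{-2n+9,\dotsc,0\}$. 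When $Q_\ell$ is singular the same Koszul argument runs on a singular quadric, or one appeals to semicontinuity; the corank $2$ points --- the images of the ODPs of $\MM$ that $\Mt$ resolves --- need the remark that a general $X$ misses the vertices.

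For generation I invoke \S3. Kapranov's description gives $D(X)=\langle \OO_X(-2n+9),\dotsc,\OO_X,\TT\rangle$ in which the residual category $\TT$ is the derived category of coherent sheaves of modules over the sheaf $\Cl_0$ of even Clifford algebras of the universal quadric over $L\cong\PP^3$. (One could instead obtain this by comparing the two semi-orthogonal decompositions of the universal quadric $\mathcal Q\subset\PP V\times L$, one coming from its structure as a family of quadrics over $L$ and one from its projection to $\PP V$, but \S3 is shorter.) It is therefore enough to show that the fully faithful functor $\Psi$ of \S4 carries $D(\Mt,\alpha^{-1})$ onto $\TT$. Its image lies in $\TT$ by the semi-orthogonality just checked, and since $\Mt$ is smooth and compact that image is admissible, hence thick, in $\TT$; let $\mathcal K$ be its orthogonal complement inside $\TT$, so the theorem reduces to $\mathcal K=0$. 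Over the complement of the discriminant, $\Cl_0$ is an Azumaya algebra whose two families of simple modules, interchanged by the monodromy around the branch divisor, are the two families of spinor bundles; the double cover $\MM\to L$ is by construction the one that untangles this, so over $\MM_\sm$ the functor $\Psi$ restricts to an equivalence $D(\MM_\sm,\alpha^{-1})\simeq D(L\setminus\Delta_1,\Cl_0)$. Hence the image of $\Psi$ already contains every object of $\TT$ supported away from the finitely many points of $L$ where $Q_\ell$ has corank $2$, and $\mathcal K$ is supported at those points. There one compares the local degeneration of $\Cl_0$ --- a conifold --- with the small resolution $\Mt\to\MM$ of the corresponding ODP, checks that the twist $\alpha$ extends over the exceptional $\PP^1\subset\Mt$, and concludes from a twisted, relative form of the derived equivalence between a conifold and its small resolution that $\mathcal K$ vanishes there too. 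Hence $\mathcal K=0$ and the decomposition follows.

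The main obstacle is precisely this last, local step: recognizing $(L,\Cl_0)$ near its conifold points as a twisted, family version of the small resolution $\Mt$ --- essentially a twisted relative form of Van den Bergh's comparison of a non-commutative crepant resolution of the conifold with its small resolution --- and carrying it out in the analytic category, since in general $\Mt$ is not algebraic and the flop producing it exists only as a complex manifold. Everything else is either a cohomology computation or quoted from \S\S2--4.
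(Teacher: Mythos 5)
Your proposal diverges from the paper's proof exactly where the work is, and the gap there is genuine, not just a step you have deferred. The paper proves generation \emph{without ever invoking} the Bondal--Orlov/Kuznetsov decomposition $D(X)=\langle\OO_X(-2n+9),\dotsc,\OO_X,D(\BB\text{-mod})\rangle$; avoiding that non-commutative input is the explicit point of the paper, stated already in the abstract. Instead, the paper takes the Fourier--Mukai transforms $F_\SS(\II_k^*(n-4))$ of a short list of twisted sheaves on $\Mt$ and, using the isomorphism $\Gamma(\AA_j|_L)\xrightarrow{\ \sim\ }\Hom_\Mt(\II_k,\II_{k+j})$ proved in Appendix B, exhibits a finite resolution of each such transform by sums of line bundles $\OO_X(j)$ with $-2n+8\le j\le 0$. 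From these one reads off directly that $\OO_X(-2n+8)=\omega_X$, and then every $\OO_X(-d)$ with $d\ge 0$, lies in the subcategory generated by $\OO_X(-2n+9),\dotsc,\OO_X$ and the image of $F_\SS$; since the non-negative twists of $\OO_X$ generate $D(X)$, the decomposition follows. In the Calabi--Yau case $n=4$ the paper simply quotes Bridgeland's criterion and proves no generation at all.

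Your route instead quotes the non-commutative decomposition as a black box and reduces to showing that $F_\SS$ is essentially surjective onto $\TT$. Two steps of that reduction do not go through as written. First, knowing that $F_\SS$ is an equivalence over an open subset of $L$ does not by itself localize the orthogonal complement $\mathcal K$ to the corank-two points; you would need a genuine devissage of objects of $\TT$ by support, which is an extra argument. Second, and fatally, the ``twisted, relative, analytic form of Van den Bergh's conifold equivalence'' you invoke at each corank-two point is not a quotable theorem. $\Mt$ is in general non-algebraic, so the comparison must be made in the analytic category, with a Brauer twist, in a three-parameter family; proving it is of difficulty comparable to the theorem you are trying to establish, and would reimport the very non-commutative machinery the paper was built to bypass. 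You flag this as the main obstacle, but it is not a technical loose end --- it is the theorem.

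On semi-orthogonality your plan is essentially sound and parallel to the paper's, but note that feeding in Kapranov's SOD requires $Q_\ell$ smooth, and your fallback by ``semicontinuity'' runs the wrong way (Ext groups can jump up under specialization, so vanishing over the smooth locus says nothing over the singular fibers). The paper avoids this by using the resolution \eqref{res_on_P}, which holds for spinor sheaves on singular quadrics as well, giving a uniform argument.
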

\noindent Setting $n=4$, we obtain a twisted derived equivalence of Calabi-Yau 3-folds predicted by Gross.  For any $n \ge 4$, our construction produces complex manifolds twisted-derived-equivalent to Bondal--Orlov and Kuznetsov's non-commutative varieties.

Such derived equivalences are related to string theory.  Vafa and Witten \cite{vafa_witten} constructed examples of string theory models that appeared to be smooth despite being compactified on singular spaces, and posed the question of how to understand them.  Aspinwall, Morrison, and Gross \cite{amg} observed that all these examples involved a non-trivial Brauer class, which corresponds physically to the so-called $B$-field.  For elliptically-fibered Calabi-Yau 3-folds, \Caldararu\ \cite{andrei_ell_fib} produced a non-K\"ahler small resolution of singularities and obtained derived equivalence twisted by a Brauer class; our result is the analogue in a non--elliptically-fibered case.  In \cite{cdhps} it was conjectured that homologically projectively dual varieties are connected by a gauged linear sigma models, and vice versa.  From this perspective, our example suggests that there should be a GLSM between the smooth, projective $X$ and the non-K\"ahler $\Mt$, taking the twisting into account.

We will end up reproving the results stated above for two and three quadrics.  For five or more quadrics, we will see that $\MM$ has no small resolution, algebraic or otherwise.

I am pleased to thank my advisor, Andrei \Caldararu, who posed this problem to me and taught me the subject, and the many faculty and students at Wisconsin with whom I have had helpful discussions.  This work was supported in part by the National Science Foundation under grants nos.\ DMS-0354112, DMS-0556042, and DMS-0838210.

\section{Spinor Sheaves}

Let us describe in more detail the bundles in our moduli problem.  Let $Q$ be a $2n-2$-dimensional quadric.  On a smooth $Q$ there are two ``spinor bundles'', of rank $2^{n-1}$.  A corank 1 $Q$ is a cone on a smooth $2n-3$-dimensional quadric, on which there is one spinor bundle, again of rank $2^{n-1}$.  We will construct a sheaf on $Q$ that, away from the cone point, is the pullback of this spinor bundle via projection from the cone point.  A corank 2 $Q$ is a cone from a line to a smooth $2n-4$-dimensional quadric, on which there are two spinor bundles $S_\pm$ of rank $2^{n-2}$.  On the smaller quadric there are no extensions between these, but when pulled back to $Q$ and extended across the cone line, there is a $\PP^1$ of non-trivial extensions of $S_+$ by $S_-$ and another of $S_-$ by $S_+$.  Thus we see the double cover of $L$, branched over the singular quadrics, the exceptional lines of $\Mt \to \MM$, and the two possible small resolutions of each ODP, related by a flop.

We will use the ``spinor sheaves'' introduced in \cite{addington_spinor}.  Given a linear space on $Q$, we define a sheaf as follows.

Let $q$ be a quadratic form on $V$, non-zero but possibly degenerate, and $W \subset V$ an isotropic subspace (that is, $q|_W = 0$).  Ultimately we will want $\corank q \le 2$ and $\dim W = n$, but is profitable to develop the theory in general.  Let $\Cl$ be the Clifford algebra, which can be defined either as a quotient of the tensor algebra
\[ \Cl = \TT(V)/\langle v^2 = q(v) \rangle \]
or as a deformation of the exterior algebra
\begin{equation}\label{wedge_hook}
\Cl = \Wedge V \qquad \qquad v\xi = v \wedge \xi + v \hook \xi.
\end{equation}
Let $I$ be the left ideal $I = \Cl \cdot w_1 \dotsm w_m$, where $w_1, \dotsc, w_m$ is a basis for $W$; because $W$ is isotropic, $I$ does not depend on the choice of basis.  Since $\Cl$ is $\ZZ/2$-graded, we can write $I = I_\ev \oplus I_\odd$.  Define a map of vector bundles
\[ \phi: \OO_{\PP V}(-1) \otimes I_\odd \to \OO_{\PP V} \otimes I_\ev \]
by $\phi(v \otimes \xi) = v\xi$.  We remark that if $\psi: \OO_{\PP V}(-1) \otimes I_\ev \to \OO_{\PP V} \otimes I_\odd$ is defined similarly then $\phi$ and $\psi$ constitute a matrix factorization of $q$.  The sheaf we want is $S := \mathop{\rm coker} \phi$.  The following facts are proved in \cite{addington_spinor}:
\begin{itemize}
\item $S$ is supported on $Q$.  Its restriction to $\PP W \cap Q_\sing$ is trivial of rank $2^{\dim V/W}$.  Elsewhere on $Q$ it is locally free of rank $2^{\dim V/W - 1}$.
\item $S$ is reflexive; in fact $S^*(1)$ is a spinor sheaf.
\item If $Q$ is smooth, recall that there is one family of $m$-dimensional isotropic subspaces if $m < n$ and two if $m = n$.  The isomorphism class of $S$ depends only on the family to which $W$ belongs.  If $m = n$, $S$ is one of the two spinor bundles discussed earlier; otherwise it is a direct sum of them.
\item If $Q$ is singular, hence a cone from $Q_\sing$ to a smooth quadric, then the isomorphism class of $S$ depends on $\PP W \cap Q_\sing$ and on the family to which the projection of $\PP W$ to the smooth quadric belongs.  For example, suppose $\dim V = 4$, $\dim W = 2$, $\corank Q = 2$, so we imagine a line on this figure:
\begin{center}
\psset{unit=5pt}
\begin{pspicture}(14,10)
\pscustom{
  \newpath
  \moveto(10,2)
  \rlineto(4,-1)
  \rlineto(-6,6)
  \rlineto(-8,2)
  \rlineto(3,-3)
  \rlineto(8,-2)
  \rlineto(2,4)
  \rlineto(-8,2)
  \rlineto(-1,-2)
  \moveto(3,6)
  \rlineto(-2,-4)
  \rlineto(8,-2)
  \rlineto(2,4)
}
\pscustom[linestyle=dashed,dash=2pt 2pt]{
  \newpath
  \moveto(4,8)
  \rlineto(-1,-2)
  \rlineto(3,-3)
  \rlineto(8,-2)
}
\end{pspicture}.
\end{center}
If $\PP W$ is not the cone line, it lies on one plane or the other and meets the cone line in a point, and these data determine the isomorphism class of $S$.  That is, the lines on one plane through a fixed point on the cone line all give isomorphic spinor sheaves.
\item If $W$ is maximal, $S$ is stable in the sense of Mumford--Takemoto.
\item If $W$ is not maximal, let $W' \supset W$ be an isotropic space of dimension one greater, and $W''$ an isotropic subspace of the opposite family to $W'$ with $W'' \cap \ker q = W'' \cap \ker q$.  (So in our picture above, $\PP W'$ is the plane containing $\PP W$ and $\PP W''$ the other plane.)  Then there is a short exact sequence
\[ 0 \to S_{W'} \to S_W \to S_{W''} \to 0. \]
Thus the $S$-equivalence class of $S_W$ depends only on $\dim W$.
\item If $H \subset \PP V$ is a hyperplane not containing $\PP W$ then the restriction $S|_{Q \cap H}$ is a spinor sheaf corresponding to the linear space $\PP W \cap H$ on $Q \cap H$.  This justifies what we said earlier about pulling back via projection from $Q_\sing$.
\end{itemize}

All this is proved in \cite{addington_spinor}, but in \S\ref{three_quadrics} we will need the explicit isomorphism between spinor sheaves coming from isotropic subspaces of the same family, so we reproduce it here.
\begin{lemma}\label{ideal_lemma}
Let $W$ and $W'$ be isotropic subspaces of the same dimension, intersecting in even codimension, with $W \cap \ker q = W' \cap \ker q$.  Then $S_W \cong S_W'$.
\end{lemma}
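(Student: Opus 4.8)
Since $\phi_W$ is given by $v\otimes\xi\mapsto v\xi$, the sheaf $S_W=\operatorname{coker}\phi_W$ depends on $I_W$ only through its structure as a $\ZZ/2$-graded left $\Cl$-module: a grading-preserving isomorphism of left $\Cl$-modules $I_W\to I_{W'}$ fits into a commutative square with $\phi_W$ and $\phi_{W'}$, hence produces $S_W\cong S_{W'}$.  So the plan is to build such an isomorphism.  First note that because $W$ is isotropic, all the contraction terms in~(\ref{wedge_hook}) drop out when one expands $e_W:=w_1\dotsm w_m$, so $e_W=w_1\wedge\dotsb\wedge w_m$ is homogeneous of degree $\dim W$ in $\Wedge V=\Cl$, $I_W=\Cl\,e_W$, and $e_W$ is determined by $W$ up to a nonzero scalar; likewise for $W'$.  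Next, for any unit $u\in\Cl^\times$ the right multiplication $x\mapsto x u$ is a left-$\Cl$-module automorphism of $\Cl$; it preserves the $\ZZ/2$-grading when $u$ is even; and since $e_W\,u^{-1}=u^{-1}(u\,e_W\,u^{-1})$ it carries $I_W=\Cl\,e_W$ onto $I_{W'}=\Cl\,e_{W'}$ precisely when $u\,e_W\,u^{-1}$ is a nonzero scalar multiple of $e_{W'}$ --- equivalently, when conjugation by $u$ sends the subspace $W\subseteq\Cl$ to $W'$.  Thus it suffices to find an \emph{even} unit $u$ with $u\,W\,u^{-1}=W'$ and then take right multiplication by $u^{-1}$, restricted to $I_W$.

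Such a $u$ comes from an isometry.  For anisotropic $v\in V$, (twisted) conjugation by $v$ acts on $V$ as the reflection $r_v$, and for an even product $u=v_1\dotsm v_{2k}$ of anisotropic vectors the signs combine so that $u\,w\,u^{-1}=g(w)$ for all $w\in V$, where $g=r_{v_1}\circ\dotsb\circ r_{v_{2k}}\in\mathrm{SO}(q)$; then $u\,e_W\,u^{-1}=g(w_1)\dotsm g(w_m)$ is a nonzero scalar times $e_{g(W)}$.  So it is enough to find an isometry $g$ of $(V,q)$, lying in the image of $\mathrm{Spin}(q)\to\mathrm{SO}(q)$, with $g(W)=W'$.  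When $q$ is nondegenerate this is Witt's extension theorem combined with Cartan--Dieudonn\'e: some $g_0\in\mathrm{O}(q)$ carries $W$ to $W'$, and the hypothesis that $W\cap W'$ has even codimension in $W$ is exactly what places $W$ and $W'$ in one $\mathrm{SO}(q)$-orbit --- for maximal isotropics this is the ``same family'' condition, while for non-maximal ones the stabilizer of $W$ already contains a reflection in an anisotropic vector of $W^\perp\setminus W$, so $g_0$ can be corrected into $\mathrm{SO}(q)$ for free --- and then $g$ is automatically an even product of reflections.

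The degenerate case, $\corank q>0$, is where I expect the real work.  Now $\Cl$ is not semisimple, reflections no longer generate $\mathrm{O}(q)$, and the hypothesis $W\cap\ker q=W'\cap\ker q$ must be used.  I would reduce modulo $\ker q$: the Clifford functor gives a canonical surjection $\Cl(V)\twoheadrightarrow\Cl(\bar V)$ with $\bar V=V/\ker q$, whose kernel is the two-sided ideal generated by $\ker q$, and one can try to assemble $u$ from a solution of the nondegenerate problem for the images $\bar W,\bar W'$.  The subtlety is that $I_W$ is a module over $\Cl(\bar V)$ only when $\ker q\subseteq W$, so in general one must track how $\ker q$ acts on $I_W$; the helpful point is that the directions of $\ker q$ lying in neither $W$ nor $W'$ give enough room to choose the lift of $g$ with determinant $1$, so that $u$ comes out even.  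Checking that the even-codimension hypothesis supplies exactly what is needed at the one place where it is needed, and that the resulting right multiplication really does carry $I_W$ onto $I_{W'}$ as graded modules, is the crux.
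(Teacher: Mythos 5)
Your mechanism---right multiplication producing a grading-preserving $\Cl$-module isomorphism $I_W \to I_{W'}$, and then passing to cokernels---is exactly the paper's mechanism, but the way you propose to find the right-multiplier is quite different, and that is where the gap is. The paper does not look for a \emph{unit} $u$, nor for a lift of an isometry through $\mathrm{Spin}(q) \to \mathrm{SO}(q)$. It just picks a basis $v_1, \dotsc, v_k$ of $W \cap W'$ and complementary bases $w_{k+1}, \dotsc, w_m$ of $W$ and $w'_{k+1}, \dotsc, w'_m$ of $W'$, and right-multiplies by the (non-invertible!) products
\[ I_W \xrightarrow{\ \cdot\, w'_{k+1} \dotsm w'_m\ } I_{W'} \xrightarrow{\ \cdot\, w_{k+1} \dotsm w_m\ } I_W. \]
Both maps are grading-preserving $\Cl$-module morphisms because $m-k$ is even, and the composite acts on $I_W$ as multiplication by $\det\bigl(q(w_i,w'_j)\bigr)$, which is nonzero precisely because $W \cap \ker q = W' \cap \ker q$ (a vector of $\ker q$ lying in, say, $W$ but not $W \cap W'$ would give a zero row). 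Each hypothesis is used exactly once, transparently, and there is no case split on $\corank q$.

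The genuine gap in your proposal is the one you yourself flag: the degenerate case. Cartan--Dieudonn\'e and Witt's extension theorem both fail when $q$ is degenerate---reflections in anisotropic vectors fix $\ker q$ pointwise, so they generate only a proper subgroup of $\mathrm{O}(q)$, and in particular no product of reflections can move $W$ to $W'$ unless $W \cap \ker q = W' \cap \ker q$, which is the beginning rather than the end of the problem. Likewise $\mathrm{Spin}(q) \to \mathrm{SO}(q)$ is no longer surjective onto what you need. Your suggested fix of reducing modulo $\ker q$ runs into the issue you note ($I_W$ is not a $\Cl(\bar V)$-module in general), and making a canonical, even lift of an isometry of $\bar V$ back to a unit of $\Cl(V)$ is real work that would have to be carried out. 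The explicit two-arrow computation above avoids all of it; I would abandon the Spin-group route.
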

\begin{proof}
Choose a basis $v_1, \dotsc, v_k$ for $W \cap W'$ and bases $w_{k+1},\dotsc, w_m$ and $w'_{k+1}, \dotsc, w'_m$ for the rest of $W$ and $W'$.  Then the maps
\[ I_W \xrightarrow{\cdot w'_{k+1} \dotsm w'_m} I_{W'} \xrightarrow{\cdot w_{k+1} \dotsm w_m} I_W \]
are homomorphisms of graded $\Cl$-modules, and the composition is just multiplication by $\det(q(w_i, w'_j)) \ne 0$.  Since the graded ideals are isomorphic, the spinor sheaves are isomorphic.
\end{proof}

\section{Construction of the Small Resolution and Pseudo-Universal Bundle}

\subsection{Varying the Quadric and Isotropic Subspace}\label{sheaf_of_algs}
Having defined spinor sheaves on one quadric and understood how they depend on a choice of isotropic space, we wish to study what happens in a family of quadrics.

First we vary the Clifford algebra.  Define a sheaf $\AA$ of graded algebras on the space $\Phi$ of quadrics:
\begin{align*}
\AA_0 &= \OO_\Phi \otimes \Wedge^0 V \\
\AA_1 &= \OO_\Phi \otimes \Wedge^1 V \\
\AA_2 &= \OO_\Phi \otimes \Wedge^2 V \ \oplus\ \OO_\Phi(1) \otimes \Wedge^0 V \\
\AA_3 &= \OO_\Phi \otimes \Wedge^3 V \ \oplus\ \OO_\Phi(1) \otimes \Wedge^1 V \\
\AA_4 &= \OO_\Phi \otimes \Wedge^4 V \ \oplus\ \OO_\Phi(1) \otimes \Wedge^2 V \ \oplus\ \OO_\Phi(2) \otimes \Wedge^0 V \\
&\dots                                                      
\end{align*}
The multiplication $\AA_1 \otimes \AA_1 \to \AA_2$ is like \eqref{wedge_hook} above, determined by
\begin{align*}
V \otimes V& \to \Wedge^2 V&		\OO_\Phi(-1) \otimes V &\otimes V \to \OO_\Phi \\
v \otimes v'& \mapsto v \wedge v'&	q \otimes v &\otimes v' \mapsto q(v,v').
\end{align*}
Observe that $\rank \AA_0 = 1$, $\rank \AA_1 = 2n$, and the ranks of the graded pieces grow for a while but eventually stabilize: for $k \ge 2n-1$ we have $\rank \AA_k = 2^{2n-1}$ and $\AA_{k+2} = \AA_k(1)$.  The fiber of $\AA$ over a point $Q \in \Phi$ is not the Clifford algebra but the $\ZZ$-graded Clifford algebra that Kapranov considers in \cite{kapranov_square}:
\[ \AA|_Q = \TT(V)[h]/\langle v^2 = q(v) h \rangle \qquad \qquad \deg h = 2. \]
This algebra is Koszul dual to the coordinate ring of $Q$, at least when $Q$ is smooth.  When $k \ge 2n-1$, its $k^\text{th}$ graded piece is isomorphic to the odd or even piece of the usual Clifford algebra.  More generally, if $X$ is a complete intersection of quadrics $Q_1, \dotsc, Q_m$ and $L \subset \Phi$ is the linear space they span, $\Gamma(\AA|_L)$ is the generalized $\ZZ$-graded Clifford algebra that Kapranov considers in \cite{kapranov_bgg}:
\[ \Gamma(\AA|_L) = \TT(V)[h_1, \dotsc, h_m]/\langle v^2 = q_1(v) h_1 + \dotsb + q_m(v) h_m \rangle. \]
This is Koszul dual to the coordinate ring of $X$.

Next we vary the ideal.  Let $\Phi' = \{ (W,Q) \in \Gr(n,V) \times \Phi : \PP W \subset Q \}$ be the relative Grassmannian of isotropic $\PP^{n-1}$s.  This is smooth, since it is a projective bundle over $\Gr(n,V)$.  Let $p: \Phi' \to \Phi$ be the natural map and $\II \subset p^* A$ be the sheaf of left ideals which, over a point $(W,Q) \in \Phi'$, is generated by the line $\Wedge^n W \subset \Wedge^n V \subset \AA_n|_Q$.  If $k \ge 2n-1$, the fiber of $\II_k$ over a point $(W,Q)$ is $I_\odd$ or $I_\ev$ from the previous section, and $\II_{k+2} = \II_k \otimes p^* \OO_\Phi(1)$.

Last we vary the spinor sheaf: on $\PP V \times \Phi'$, let $\SS$ be the cokernel of the map
\[ \OO_{\PP V}(-1) \boxtimes \II_{2n-1} \to \OO_{\PP V} \boxtimes \II_{2n} \]
given by $v \otimes \xi \mapsto v\xi$.  Its restriction to a slice $\PP V \times (W,Q)$ is the spinor sheaf from the previous section.

\subsection{Restriction to the linear system $L$}
Now if $L \subset \Phi$ is a general linear system, its base locus $X$ avoids the singularities of each $Q \in L$, so the restriction of $\SS$ to $X \times p^{-1}L \subset \PP V \times \Phi'$ is a vector bundle.  We would like it to be the universal bundle for our moduli problem, but many points of $p^{-1}L$ parametrize the same bundle on $X$.

The branched cover $\MM$ of $L$ is the Stein factorization of $p^{-1}L \to L$:
\begin{diagram}
& & p^{-1} L & \rInto & \Phi' \\
& \ldTo \\
\MM & & \dTo & & \dTo_p \\
& \rdTo \\
& & L & \rInto & \Phi.
\end{diagram}
If $L$ is a line or a plane, the points in a fiber of $p^{-1}L \to \MM$ all parametrize the same spinor sheaf, and points of different fibers parametrize different spinor sheaves, so what we want is a section of $p^{-1}L \to \MM$.  When $L$ is a line there is a section; when $L$ is a plane there are only local sections; when $L$ is a 3-plane, these local sections will let us resolve the singularities of $\MM$.

\subsection{Two Quadrics}
If $L$ is a line, Reid \cite{reid} shows that $X$ contains $\PP^{n-2}$s; let $\Pi$ be one of them.  On a smooth $Q \in L$, there are two $\PP^{n-1}$s containing $\Pi$, one from each family.  On a singular $Q$ there is only one, namely the span of $\Pi$ and the cone point.  Thus from $\Pi$ we get a section of $p^{-1}L \to \MM$.  We obtain the desired universal sheaf on $X \times \MM$ by pulling back $\SS$ from $\PP V \times \Phi'$.  Different choices of $\Pi$ give us different universal sheaves on $X \times \MM$, but different universal sheaves can only differ by the pullback of a line bundle from $\MM$, which reflects Reid's result that the space of $\PP^{n-2}$s on $X$ is isomorphic to $\mathop{\rm Pic}^0\MM$.

\subsection{Three Quadrics}\label{three_quadrics}
If $L$ is a plane then $X$ does not generally contain a $\PP^{n-2}$\footnote{The case where $n=3$ and $X$ contains a line is treated by Ingalls and Khalid \cite{ingalls_khalid}.} so we will not be able to produce a global section of $p^{-1}L \to \MM$ as we did above, but we will be able to produce local sections, as follows.  What was important about $\Pi$ above was that it lay on every $Q \in L$ and that it avoided the cone points of the singular $Q$s.  Now we will let $\Pi(Q)$ vary with $Q$.  To be precise,
\begin{lemma}\label{Pi_lemma}
$L$ can be covered by analytic open sets $U$ on which there are maps $\Pi: U \to \Gr(n-1,V)$ with $\Pi(Q) \subset Q_\sm$ for each $Q \in U$.
\end{lemma}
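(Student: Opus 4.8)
The plan is to realize $\Pi$ as a local holomorphic section of the incidence variety
\[ Z \;=\; \{\, (W,Q)\in\Gr(n-1,V)\times L \ :\ \PP W\subseteq Q \,\}, \]
arranged so that in addition $\PP W$ misses $Q_\sing$. Fix $Q_0\in L$ with representing form $q_0$, and recall that $(Q_0)_\sing=\PP\ker q_0$. First I would produce a point $(W_0,Q_0)\in Z$ with $W_0\cap\ker q_0=0$ (equivalently $\PP W_0\cap(Q_0)_\sing=\emptyset$): since $L$ is general and $\Delta_3$ has codimension $6>3\ge\dim L$, we have $L\cap\Delta_3=\emptyset$, so $q_0$ has corank $c\le 2$ (and $c\le 1$ when $L$ is a net), whence $Q_0$ is the cone with vertex $\PP\ker q_0$ over a smooth quadric $Q'\subset\PP(V/\ker q_0)$ of dimension $2n-2-c$, whose maximal linear subspaces have dimension $n-1-\lceil c/2\rceil\ge n-2$. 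Choosing a $\PP^{n-2}$ inside one of them and lifting it to a linear subspace $W_0\subset V$ transverse to $\ker q_0$ gives the point, since then $q_0|_{W_0}$ pulls back $q_0|_{W_0}=0$ on $Q'$.

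Next I would show $Z$ is smooth at $(W_0,Q_0)$ and that the projection $Z\to L$ is submersive there. Near $(W_0,Q_0)$, $Z$ is cut out by the $\binom n2$ equations ``$q|_W=0$,'' and a direct computation shows that the differential of these equations at $(W_0,Q_0)$ is
\[ \Hom(W_0,V/W_0)\oplus T_{Q_0}L \;\longrightarrow\; \mathop{\rm Sym}^2 W_0^*, \qquad (\dot w,\dot q)\longmapsto \dot q|_{W_0}+\delta(\dot w), \]
where $\delta(\dot w)(w,w')=q_0(\dot w(w),w')+q_0(w,\dot w(w'))$ and $\dot q|_{W_0}$ is well defined because $q_0|_{W_0}=0$. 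That same vanishing lets $q_0$ descend to a pairing $V/W_0\times W_0\to\CC$; as $W_0\cap\ker q_0=0$ this pairing has trivial right kernel, so the induced map $V/W_0\to W_0^*$ is onto and $\delta$ factors as $\Hom(W_0,V/W_0)\twoheadrightarrow W_0^*\otimes W_0^*\twoheadrightarrow\mathop{\rm Sym}^2 W_0^*$, hence is surjective. Therefore the displayed differential is onto, so $Z$ is smooth at $(W_0,Q_0)$ with tangent space its kernel; and solving $\delta(\dot w)=-\dot q|_{W_0}$ shows every $\dot q\in T_{Q_0}L$ lifts into $T_{(W_0,Q_0)}Z$, i.e. $Z\to L$ is submersive at $(W_0,Q_0)$.

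By the holomorphic implicit function theorem there is then a neighbourhood $U$ of $Q_0$ in $L$ and a holomorphic map $\Pi\colon U\to\Gr(n-1,V)$ with $\Pi(Q_0)=W_0$ and $\PP\Pi(Q)\subseteq Q$ for all $Q$. Finally I would shrink $U$ so that $\PP\Pi(Q)\cap Q_\sing=\emptyset$ throughout: the set $\{(Q,x)\in U\times\PP V : x\in\PP\Pi(Q),\ x\in Q_\sing\}$ is analytic and proper over $U$, being a closed subset of the $\PP^{n-2}$-bundle $\{(Q,x):x\in\PP\Pi(Q)\}$, so by the proper mapping theorem its image in $U$ is closed analytic, and it omits $Q_0$ by the choice of $W_0$; deleting it and covering $L$ by such $U$'s finishes the proof.

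The main obstacle is the surjectivity of $\delta$ in the second step; everything else --- the existence of $W_0$, the smoothness and submersion statements, and the analytic bookkeeping at the end --- is formal once that normal-bundle-type computation is in hand, and the whole point of the surjectivity is that it rests only on the transversality $W_0\cap\ker q_0=0$. It is also worth noting that one cannot expect $\Pi$ to be algebraic and globally defined on $L$, since a general $X$ contains no $\PP^{n-2}$ (contrast the pencil case), which is exactly why the statement is local and analytic.
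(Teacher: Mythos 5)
Your proof is correct, but it takes a genuinely different route from the paper. The paper avoids any explicit tangent-space computation by a hyperplane slicing trick: it picks a hyperplane $H$ with $Q_0\cap H$ smooth, passes to the relative Grassmannian $q\colon\Psi'\to\Psi$ of isotropic $\PP^{n-2}$s over the space $\Psi$ of quadrics in $H$, and then invokes $\PGL$-equivariance together with Sard's theorem: since $q$ is a submersion \emph{somewhere} and is $\PGL$-equivariant, it is a submersion over the whole open orbit of $\Psi$, which contains $f(Q_0)$ because $Q_0\cap H$ is smooth. The implicit function theorem then gives a local section, and — a pleasant side effect — since $\Pi(Q)\subset H$ and $H$ avoids $Q_\sing$ for $Q$ near $Q_0$, the condition $\Pi(Q)\subset Q_\sm$ comes for free, with no shrinking step needed. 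Your approach instead works directly on the incidence variety $Z$ over $L$ and establishes submersiveness of $Z\to L$ at a well-chosen $(W_0,Q_0)$ by explicitly computing the differential into $\mathop{\rm Sym}^2 W_0^*$ and showing $\delta$ factors through the surjection $V/W_0\twoheadrightarrow W_0^*$ coming from $W_0\cap\ker q_0=0$; this is a concrete normal-bundle-type argument rather than an appeal to group actions. Your version is more elementary and makes transparent exactly which transversality hypothesis ($W_0\cap\ker q_0=0$) drives the result, and it handles the later four-quadrics case ($\corank\le 2$) uniformly without modifying the setup, whereas the paper must later swap the hyperplane for a codimension-2 linear space; the cost is the extra (entirely routine) shrinking step at the end to enforce $\Pi(Q)\cap Q_\sing=\emptyset$, which the paper's hyperplane makes automatic.
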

\begin{proof}
Let $Q_0 \in L$.  Then the singular locus of $Q_0$ is at most a point, so we can choose a hyperplane $H \subset \PP V$ such that $Q_0 \cap H$ is smooth.  Let $\Psi$ be the space of quadrics in $H$ and $f: \Phi \rDashto \Psi$ the rational map given by intersecting with $H$, which is regular near $Q_0$.  Let $q: \Psi' \to \Psi$ be the relative Grassmannian of isotropic $\PP^{n-2}$s.
\begin{diagram}
 & & \phantom{'}\Psi' & \rTo & \Gr(n-1,V) \\
 & & \dTo_q \\
Q_0 \in \Phi & \rDashto^{\quad f\quad} & \Psi.
\end{diagram}
$\PGL_{2n-1}\CC$ acts on $\Psi$ and $\Psi'$, and $q$ is equivariant.  $\Psi'$ is smooth as $\Phi'$ was, so we can apply theorems from differential geometry.  By Sard's theorem, $q$ is a submersion almost everywhere.  Since the orbit of $f(Q_0)$ is open in $\Psi$, $q$ is a submersion over $f(Q_0)$.  Thus by the implicit function theorem there is an analytic neighborhood $W$ of $f(Q_0)$ over which $q$ has a local section.

Let $U = L \cap f^{-1}W$ and $\Pi: U \to \Gr(n-1, V)$ be the obvious composition.
\end{proof}

Now on a smooth $Q \in L$ there are two $\PP^{n-1}$s containing $\Pi(Q)$, and on a singular $Q$ there is only one, so we get local sections of $p^{-1}L \to \MM$.  If we pull back $\II_k$, $k \ge 2n-1$, from $\PP V \times \Phi'$ to $U \times \MM$ via these local sections, Lemma \ref{ideal_lemma} tells us how to glue together the resulting local sheaves on a pairwise intersection $U_i \cap U_j$, perhaps after shrinking the $U$s.  In fact Lemma \ref{ideal_lemma} glues $\II_k$ to $\II_{k+2l}$ for some $l > 0$, but these are just twists of each other, hence are locally isomorphic.  But we could not have glued for $k < 2n-1$.

We cannot hope for these gluings to match up on a triple intersection $U_i \cap U_j \cap U_{k'}$, so we do not get an honest sheaf on $X \times \MM$ but only a twisted sheaf, twisted by the pullback of a class $\alpha \in H^2(\MM, \OO_\MM^*)$.  (A priori $\alpha$ appears to take values in $\AA ut(\II_k)$, which is bigger than $\OO_\MM^*$, but because our ideals are simple modules \cite{addington_spinor}, in fact we only need $\OO_\MM^*$.)  We will call this twisted pullback $\II_k$ again and hope that no confusion results.  We can also pull back $\SS$ and use the same gluing, and still have
\[ \OO_\MM(-1) \boxtimes \II_{2n-1} \to \OO_\MM \boxtimes \II_{2n} \to \SS \to 0. \]

\subsection{Four Quadrics}\label{four_quadrics}
If $L$ is a 3-plane then $\MM$ has ODPs, and the points in the fiber of $p^{-1}L \to \MM$ over a singular point of $\MM$ do not all parametrize the same sheaf on $X$, but our construction from the previous subsection will resolve both problems.  The proof of Lemma \ref{Pi_lemma} goes through with a one modification: the singular locus of a $Q \in L$ may now be a line, so $H$ must be codimension 2.  On a smooth $Q \in L$ there are two $\PP^{n-1}$s containing $\Pi(Q)$, on a corank 1 $Q$ there is one, and on a corank 2 $Q$ there is a whole line of them: for each point in the cone line of $Q$, we can take its span with $\Pi(Q)$.

Thus we have we have local {\it rational} sections of $p^{-1}L \to \MM$ that are regular away from the singular points of $\MM$.  Let
\[ \Gamma = \{ (W, Q) \in \Gr(n-1,V) \times U : \Pi(Q) \subset \PP W \subset Q \} \]
be the graph of one, which in the Appendix A we show to be smooth.  As we have said, $\Gamma \to \MM$ contracts some lines over the singular points of $\MM$, which are isolated, and is an isomorphism elsewhere.  Thus we can cut out the ODPs of $\MM$ and glue in $\Gamma$ to produce the resolution of singularities $\Mt \to \MM$.  The two small resolutions of each ODP are again visible here: in our modification of Lemma \ref{Pi_lemma}, the fiber of $\Psi' \to \Psi$ over $f(Q_0)$ now has two connected components.

Now the points of $\Mt$ are in one-to-one correspondence with the isomorphism classes of spinor sheaves on $X$.  As in the previous subsection we get twisted sheaves $\II_k$ on $\Mt$ for $k \ge 2n-1$, and $\SS$ on $X \times \Mt$.

\subsection{Five or More Quadrics}
If $L$ is a 4-plane, the singular locus of $\MM$ is a curve $C$, which we can identify with its image in $L$.  To construct a small resolution as above, for each corank 2 $Q \in C$ we would have to choose (continuously) one of the two families of $\PP^{n-2}$s on $Q_\sm$, or equivalently one of the two families of $\PP^n$s on $Q$.  But this is impossible; the associated double cover of $C$ has no section, as follows.  Consider $\{ (\Lambda, Q) \in \Gr(n+1,V) \times \Phi : \Lambda \subset Q \}$, the relative Grassmannian of isotropic $\PP^n$s.  This is a projective bundle over $\Gr(n+1,V)$, hence is irreducible.  The image of the natural map to $\Phi$ is the locus $\Delta_2$ of quadrics of corank at least 2, since a corank 1 quadric contains only $\PP^{n-1}$s.  Since $\Delta_2$ is codimension 3, the preimage of a general 4-plane $L \subset \Phi$ is irreducible by Bertini's theorem.

Not only does our construction of a small resolution of $\MM$ fail, but $\MM$ has no small resolution whatsoever, as follows.  Let $Q \in C$ be any corank 2 quadric, and choose a general 3-plane $L' \subset L$ through $Q$.  The preimage of this in $\MM$ is a 3-fold with ODPs; the two small resolutions of each are in natural bijection with the two families of $\PP^n$s on $Q$.  Thus a small resolution of $\MM$ would give a continuous choice of a family of $\PP^n$s on each $Q \in C$, which we just saw is impossible.

A small resolution of $\MM$ would when $\dim L \ge 5$ would give one when $\dim L = 5$, so this too is impossible.  Note that when $\dim L \ge 7$ the singular locus of $\MM$ is no longer smooth.

\section{Embedding of \texorpdfstring{$D(\Mt,\alpha^{-1})$}{D(M,alpha\textasciicircum-1)}}
In this section we will show that the Fourier-Mukai transform
\[ F_S: D(\Mt,\alpha^{-1}) \to D(X) \]
is an embedding, where $\SS$ is the $\alpha$-twisted sheaf on $X \times \Mt$ constructed in the \S\ref{four_quadrics}.  It suffices to check that for any $x, y \in \Mt$,
\[ \Hom(F_\SS \OO_x, F_\SS \OO_y[i]) = \begin{cases}
\CC & \text{if } x = y \text{ and } i = 0 \\
0 & \text{if } x \ne y \text{ or } i < 0 \text{ or } i > 3.
\end{cases} \]
This criterion was proved by Bondal and Orlov \cite{bo_semiorth} for untwisted sheaves on algebraic varieties, but it applies equally well to twisted sheaves and to complex manfolds \cite{andrei_thesis}.

If $x \in \Mt$ then $F_\SS \OO_x$ is the restriction to $X$ of a spinor sheaf $S$ on a quadric $Q$ containing $X$.  In particular $F_\SS \OO_x$ is a sheaf, not a complex of sheaves, so $\Hom(F_\SS \OO_x, F_\SS \OO_y[i]) = \Ext^i_X(F_\SS \OO_x, F_\SS \OO_y) = 0$ for $i < 0$.  The singular locus $Q_\sing$ of $Q$ is at most a line and does not meet $X$, and $S$ is a vector bundle except perhaps at one point of $Q_\sing$.

We will use two additional facts from \cite{addington_spinor}:
\begin{itemize}
\item On $\PP V$ there is an exact sequence
\begin{equation}\label{res_on_P}
0 \to \OO_{\PP V}^N(-1) \to \OO_{\PP V}^N \to S \to 0
\end{equation}
where $N = 2^{n-1}$.
\item On $Q$ there is an exact sequence
\begin{equation}\label{res_on_Q}
0 \to S(-2) \to \OO_Q^N(-1) \to \OO_Q^N \to S \to 0.
\end{equation}
\end{itemize}

\subsection{Hom and Ext between spinor sheaves from different quadrics}
First suppose that $S_1$ and $S_2$ are spinor sheaves on different quadrics $Q_1$ and $Q_2$.  Since $S_1^*(1)$ is a spinor sheaf, we have resolutions
\begin{gather*}
0 \to \OO_{\PP V}^N(-2) \to \OO_{\PP V}^N(-1) \to S_1^* \to 0 \\
0 \to \OO_{\PP V}^N(-1) \to \OO_{\PP V}^N \to S_2 \to 0.
\end{gather*}
Choose quadrics $Q_3, Q_4 \in L$ that intersect each other and $Q_1$ and $Q_2$ transversely and avoid the points where $S_1$ and $S_2$ fail to be vector bundles (there are at most two).  Restrict one of the resolutions above to $Q_3 \cap Q_4$ and tensor it with the other to get a resolution
\[ 0 \to \OO_{Q_3 \cap Q_4}^{N^2}(-3) \to \OO_{Q_3 \cap Q_4}^{2N^2}(-2) \to \OO_{Q_3 \cap Q_4}^{N^2}(-1) \to (S_1^* \otimes S_2)|_X \to 0. \]
Now from the Koszul complex
\[ 0 \to \OO_{\PP V}(-4) \to \OO_{\PP V}(-2)^2 \to \OO_{\PP V} \to \OO_{Q_3 \cap Q_4} \to 0 \]
we find that $\OO_{Q_3 \cap Q_4}(-i)$ has no cohomology for $1 \le i \le 3$ (we required $n \ge 4$), so $\Ext^*_X(S_1|_X, S_2|_X) = H^*((S_1^* \otimes S_2)|_X) = 0$.

\subsection{Hom between spinor sheaves from the same quadric}
Next suppose that $S$ and $S'$ are two spinor bundles on the same quadric $Q_1$.  From \eqref{res_on_P} we see that
\begin{equation}\label{coho_vanishes}
0 = H^*(S'(-1)) = H^*(S'(-2)) = \dotsb = H^*(S'(-2n+2)).
\end{equation}
Applying $\Hom_{Q_1}(-,S')$ to twists of \eqref{res_on_Q} we see that
\begin{equation}\label{chain_of_twists}
\Ext^i_{Q_1}(S,S') = \Ext^{i+2}_{Q_1}(S,S'(-2)) = \dotsb = \Ext^{i+2n-2}_{Q_1}(S,S'(-2n+2)).
\end{equation}
In particular,
\begin{multline*}
0 = \Ext^{<2}_{Q_1}(S,S'(-2)) = \Ext^{<4}_{Q_1}(S,S'(-4)) = \Ext^{<6}_{Q_1}(S,S'(-6)).
\end{multline*}
Choose quadrics $Q_2, Q_3, Q_4 \in L$ that meet each other and $Q_1$ transversely and tensor $S'$ with the Koszul complex of $Q_2 \cap Q_3 \cap Q_4$ to get
\[ 0 \to S'(-6) \to S'(-4)^3 \to S'(-2)^3 \to S' \to S'|_X \to 0. \]
Applying $\Hom(S, -)$ and using the facts above, we find that $\Hom(S,S'|_X) = \Hom(S,S')$.  In \cite{addington_spinor} it is shown that $\Hom(S,S) = \CC$ for the spinor sheaves considered here.  The same proof is easily adapted to show that $\Hom(S,S') = 0$ if $S \ne S'$.  Of course $\Hom(S|_X,S'|_X) = \Hom(S,S'|_X)$.

\subsection{Ext between different spinor sheaves from the same quadric}
If $S$ and $S'$ are distinct spinor bundles on $Q_1$ then either they are both vector bundles or they fail to be so at distinct points, so we can let $E = S^* \otimes S'$ and rewrite \eqref{chain_of_twists} as
\[ H^i(E) = H^{i+2}(E(-2)) = \dotsb = H^{i+2n-2}(E(-2n+2)). \]
Above we saw that $H^0(E) = 0$, and since $\dim Q_1 = 2n-2$, we see that $E, E(-2), \dotsc, E(-2n+2)$ have no cohomology.  Thus
\[ 0 \to E(-6) \to E(-4)^3 \to E(-2)^3 \to E \to E|_X \to 0 \]
is a resolution of $E|_X$ by sheaves with no cohomology, so $\Ext^*_X(S|_X,S'|_X) = H^*(E|_X) = 0$.

\subsection{Ext of a spinor sheaf with itself}
If $S$ on $Q_1$ fails to be a vector bundle at some point, choose a $Q_2 \in L$ that avoids that point and intersects $Q_1$ transversely.  If we now let $E = (S^* \otimes S')|_{Q_1 \cap Q_2}$, we only get
\[ H^i(E) = H^{i+2}(E(-2)) = \dotsb = H^{i+2n-4}(E(-2n+4)). \]
Since $\dim Q_1 \cap Q_2 = 2n-3$, we have
\[ H^{>1}(E) = H^{>3}(E(-2)) = H^{>5}(E(-4)) = 0. \]
Thus from
\[ 0 \to E(-4) \to E^2(-2) \to E \to E|_X \to 0 \]
we find that $\Ext^i_X(S|_X, S|_X) = H^i(E|_X) = 0$ for $i > 3$.

\section{Semi-Orthogonal Decomposition of $D(X)$}

Recall that our goal is to prove that
\[ D(X) = \langle \OO_X(-2n+9), \dotsc, \OO_X(-1), \OO_X, D(\Mt, \alpha^{-1}) \rangle. \]
In the Calabi-Yau case $n=4$ there are no line bundles, so in fact we have a derived equivalence.  If we were only interested in this case, we would be done:
\begin{theorem*}[Bridgeland \cite{bridgeland_omegaX}]
A fully faithful Fourier-Mukai transform $F_\SS: D(\Mt, \alpha^{-1}) \to D(X)$ is an equivalence if for any $x \in \Mt$ one has
\[ F_\SS \OO_x \otimes \omega_X \cong F_\SS \OO_x. \]
\end{theorem*}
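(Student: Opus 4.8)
The plan is the classical argument for promoting a fully faithful integral transform to an equivalence: realize the image of $F_\SS$ as an admissible subcategory of $D(X)$, use the hypothesis together with Serre duality to show that its left and right orthogonal complements coincide, and then conclude from the indecomposability of $D(X)$. For the formal setup: since $F_\SS$ is fully faithful and is a Fourier--Mukai transform with a kernel that is a sheaf, flat over $\Mt$, it admits a left adjoint $L$ and a right adjoint $R$, each again an integral functor whose kernel is obtained from $\SS$ by the usual Serre-dualizing recipe, and full faithfulness gives $LF_\SS \cong RF_\SS \cong \mathrm{id}$. For $X$ smooth projective this is standard; for twisted sheaves on the complex manifold $\Mt$ it is exactly the formalism developed in \Caldararu's thesis \cite{andrei_thesis}. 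It follows that $\mathcal B := \mathrm{im}(F_\SS) \subset D(X)$ is an admissible triangulated subcategory, with projection functors $F_\SS R$ and $F_\SS L$, so that $D(X) = \langle {}^\perp\mathcal B, \mathcal B\rangle = \langle \mathcal B, \mathcal B^\perp\rangle$ are semiorthogonal decompositions; and $\mathcal B \ne 0$ because it contains the restricted spinor sheaves $F_\SS\OO_x$ of \S4.

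The heart of the proof is to show ${}^\perp\mathcal B = \mathcal B^\perp$. Since the skyscrapers $\OO_x$, $x \in \Mt$, form a spanning class of $D(\Mt,\alpha^{-1})$, an object $Y \in D(X)$ lies in ${}^\perp\mathcal B$ iff $LY = 0$ iff $\Hom(Y, F_\SS\OO_x[i]) = 0$ for all $x$ and $i$, and lies in $\mathcal B^\perp$ iff $RY = 0$ iff $\Hom(F_\SS\OO_x, Y[i]) = 0$ for all $x$ and $i$ (using adjunction and the two halves of the spanning-class condition). Now Serre duality on $X$ identifies $\Hom(F_\SS\OO_x, Y[i])$ with $\Hom(Y, F_\SS\OO_x \otimes \omega_X[\dim X - i])^\vee$, and the hypothesis $F_\SS\OO_x \otimes \omega_X \cong F_\SS\OO_x$ rewrites this as $\Hom(Y, F_\SS\OO_x[\dim X - i])^\vee$. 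Reading this isomorphism in both directions shows that the two families of vanishing conditions above are equivalent, i.e. ${}^\perp\mathcal B = \mathcal B^\perp =: \mathcal C$.

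It remains to deduce $\mathcal C = 0$. We have $D(X) = \langle \mathcal C, \mathcal B\rangle$ with $\mathcal C = {}^\perp\mathcal B$ and $\mathcal C = \mathcal B^\perp$, so $\Hom$ vanishes in both directions between $\mathcal B$ and $\mathcal C$; hence the defining triangle $C \to E \to B \to C[1]$ of each object of the decomposition splits, its connecting map lying in $\Hom(B, C[1]) = 0$, and $D(X) = \mathcal B \oplus \mathcal C$ is an orthogonal direct sum of triangulated categories. But $X$ is a smooth connected projective variety, so $D(X)$ is indecomposable: the block of $\OO_X$ contains every $\OO_x$, since $\Hom(\OO_X,\OO_x) \ne 0$, and the $\OO_x$ generate $D(X)$. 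Therefore $\mathcal C = 0$, $\mathcal B = D(X)$, and $F_\SS$ is essentially surjective, hence an equivalence. The formal skeleton is routine; the one place that requires care — and the reason the statement is quoted rather than reproved here — is checking that the ingredients (existence and integral-kernel description of the adjoints, and the spanning property of the skyscrapers) remain valid for $\alpha^{-1}$-twisted sheaves on the possibly non-K\"ahler manifold $\Mt$, which is precisely what \Caldararu's framework supplies.
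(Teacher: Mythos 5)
The paper does not prove this statement: it quotes Bridgeland's theorem as a known result (citing \cite{bridgeland_omegaX}) and uses it as a black box to dispatch the Calabi--Yau case $n=4$, so there is no internal proof to compare against. Your sketch is essentially Bridgeland's original argument --- adjoints plus the skyscraper spanning class reduce membership in the two orthogonals of $\mathrm{im}(F_\SS)$ to Hom-vanishing against the $F_\SS\OO_x$, Serre duality together with the hypothesis $F_\SS\OO_x\otimes\omega_X\cong F_\SS\OO_x$ identifies the two orthogonals, and indecomposability of $D(X)$ for $X$ connected smooth projective forces the common orthogonal to vanish --- and it is correct. You are also right to isolate the one genuinely non-routine ingredient: that the adjoint and spanning-class formalism persists for $\alpha^{-1}$-twisted sheaves on the non-K\"ahler complex manifold $\Mt$, which is exactly what \Caldararu's thesis \cite{andrei_thesis} supplies (Serre duality and the existence of integral-kernel adjoints in that setting). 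One small slip: with the usual Bondal--Orlov/Huybrechts convention that $\langle\mathcal{A},\mathcal{B}\rangle$ requires $\Hom(\mathcal{B},\mathcal{A})=0$, admissibility of $\mathcal{B}$ yields $D(X)=\langle\mathcal{B}^\perp,\mathcal{B}\rangle=\langle\mathcal{B},{}^\perp\mathcal{B}\rangle$; you wrote the two decompositions with the factors in the reverse order. This is purely notational and does not affect the argument.
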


In the Fano case, from the Koszul complex of $X$ we see that $\OO_X(-2n+9), \dotsc, \OO_X$ is a strong exceptional collection.  From \eqref{res_on_P} we see that they are right orthogonal to the spinor sheaves, which are the Fourier-Mukai transforms of the skyscraper sheaves on $\Mt$, hence to all of $D(\Mt, \alpha^{-1})$.\footnote{In this section we are using facts from \cite{huybrechts} quite freely.}

We will show that these line bundles and $D(\Mt, \alpha^{-1})$ generate the line bundles $\OO_X(-d)$ for all $d \ge 0$, hence generate all of $D(X)$.

Recall that $\II_k$, $k \ge 2n-1$ are twisted sheaves on $\Mt$.  We will take the Fourier-Mukai transform of $\II_{2n}^*(n-4)$.  We will want to have this diagram visible:
\begin{diagram}
& & X \times \Mt \\
 & \ldTo & & \rdTo \\
X & & & & \Mt.
\end{diagram}
On $X \times \MM$ there is an exact sequence
\[ 0 \to \SS \to \OO_X(1) \boxtimes \II_{2n+1} \to \OO_X(2) \boxtimes \II_{2n+2} \to \dotsb. \]
If $k \ge 2n-1$ and $l \ge 0$ then $\II_{k}(l) = \II_{k+2l}$, so when we tensor with the pullback of $\II_{2n}^*(n-4)$, we get
\begin{multline*}
0 \to \SS \otimes \pi_\MM^* \II_{2n}^*(n-4) \to \OO_X(1) \boxtimes \HH om_\Mt(\II_{2n},\II_{4n-7}) \\
\to \OO_X(2) \boxtimes \HH om_\MM(\II_{2n},\II_{4n-6}) \to \dotsb.
\end{multline*}
For brevity, let $A_j = \Gamma(\AA_j|_L)$, which we discussed in \S\ref{sheaf_of_algs}.  It is reasonable to hope that for $j \ge 2n-7$, the natural map $A_j \to \Hom_\Mt(\II_{2n},\II_{2n+j})$ is an isomorphism and $\Ext^i_\Mt(\II_{2n},\II_{2n+j}) = 0$.  In Appendix B we show that this is the case.

Now when we push down to $X$ we have
\[ 0 \to F_\SS(\II_{2n}^*(n-4)) \to \OO_X(1) \otimes A_{2n-7} \to \OO_X(2) \otimes A_{2n-6} \to \dotsb. \]
But the sequence
\[ 0 \to \OO_X \otimes A_0 \to \OO_X(1) \otimes A_1 \to \OO_X(2) \otimes A_2 \to \dotsb \]
is exact, so we have a resolution
\begin{multline*}
0 \to \OO_X(-2n+8) \otimes A_0 \to \OO_X(-2n+9) \otimes A_1 \\
\to \dotsb \to \OO_X \otimes A_{2n-8} \to F_\SS(\II_{2n}^*(n-4)) \to 0
\end{multline*}
so we can generate $\OO_X(-2n+8) = \omega_X$.  Similarly, from $F_\SS(\II_{2n+1}^*(n-4))$ we can generate $\OO_X(-2n+7)$, and similarly all the negative line bundles.

\appendix

\section{Appendix: Smoothness of \texorpdfstring{$\Gamma$}{Gamma}}
In this appendix we give the rather messy proof that the space $\Gamma$ constructed in \S\ref{four_quadrics} is smooth.  Let
\[ \Phi'' = \{ (\Pi, Q) \in \Gr(n-1,V) \times \Phi : \Pi \subset Q \} \]
be the relative Grassmanian of isotropic $\PP^{n-2}$s,
\[ \Phi''' = \{ (\Pi, \Lambda, Q) \in \Gr(n-1,V) \times \Gr(n,V) \times \Phi : \Pi \subset \Lambda \subset Q \} \]
the relative flag variety, and $p': \Phi'' \to \Phi$ and $p'': \Phi''' \to \Phi''$ the natural maps, with apologies for all the primes.  We produced an analytic open set $U \subset \Phi$ and a map $\Pi: U \to \Gr(n-1,V)$ with $\Pi(Q) \subset Q_\sm$ for every $Q \in U$.  Note that $\Pi$ stands for plane, not projection.  We summarize the situation in a diagram:
\begin{diagram}
& & \phantom{'''}\Phi''' &= \{ \makebox[2in][l]{flags $\PP^{n-2} \subset \PP^{n-1} \subset$ quadric$\}$} \\
& & \dTo_{p''} \\
& & \phantom{''}\Phi'' &= \{ \makebox[2in][l]{flags $\PP^{n-2} \subset$ quadric$\}$} \\
& \ruTo^s & \dTo_{p'} \\
U & \rInto & \Phi &= \{ \makebox[2in][l]{quadrics$\}$}
\end{diagram}
where $s(Q) = (\Pi(Q),Q)$.  Then $\Gamma$ was the fiber product $(L \cap U) \times_{\Phi''} \Phi'''$.  Since $\Phi''$ and $\Phi'''$ are smooth, to show that $\Gamma$ is smooth it suffices to show that $s|_L$ and $p''$ are transverse, that is, for each $Q \in L \cap U$ and $\Lambda \in \Gr(n,V)$ with $\Pi(Q) \subset \Lambda \subset Q$, $s_* T_Q L$ and $p''_* T_{(\Pi(Q),\Lambda,Q)} \Phi'''$ span $T_{s(Q)} \Phi'$.

$\PGL_{2n}\CC$ acts on $\Phi$ and all its primes, and that all the projections are equivariant.  Let us describe the orbits:
\begin{itemize}
\item The orbits of $\Phi$ are determined by the corank of $Q$, or equivalently by the dimension of $Q_\sing$.
\item The orbits of $\Phi''$ are determined by the type of the flag
\[ \PP^{n-2} \cap Q_\sing\ \subset\ Q_\sing. \]
For example, the following are orbits: smooth quadrics with any $\PP^{n-2}$; corank 1 quadrics with $\PP^{n-2}$s avoiding the cone point; corank 1 quadrics with $\PP^{n-2}$s containing the cone point; corank 2 quadrics with $\PP^{n-2}$s meeting the cone line in a point; corank 2 quadrics with $\PP^{n-2}$s containing the cone line.
\item The orbits of $\Phi'''$ are determined by the type of the flag
\[ \PP^{n-2} \cap Q_\sing\ \subset\ \PP^{n-1} \cap Q_\sing\ \subset\ Q_\sing. \]
If $O$ is an orbit in $\Phi''$ with $\corank Q \le 2$ then $p''^{-1}(O)$ is a single orbit in $\Phi'''$.
\end{itemize}

Let $O \subset \Phi'''$ be the orbit of $(\Pi(Q),\Lambda,Q)$; then $p''(O)$ and $p'(p''(O))$ are also orbits, so all are smooth manifolds.  By Sard's theorem, $O \to p''(O)$ is a submersion, as is $p''(O) \to p'(p''(O))$.  Since $s$ is a section of $p'$,
\[ T_{s(Q)} \Phi'' = \ker p'_* + s_* T_Q \Phi. \]
Since $L$ is transverse to $p'(p''(O))$,
\[ T_{s(Q)} \Phi'' = \ker p_* + s_* T_Q p'(p''(O)) + s_* T_Q L. \]
Now $s_* T_Q p'(p''(O)) \subseteq T_{s(Q)} s(p'(p''(O)))$, and by our construction of $s$, $s(p'(p''(O))) \subset p''(O)$.  Also, since $p'_*$ is surjective, $\ker p'_* = T_{s(Q)} p'^{-1}(Q)$.  But $p''(O)$ is open in $p'^{-1}(p'(p''(O))$, so
\[ T_{s(Q)} \Phi'' = T_{s(Q)} p''(O) + s_* T_Q L. \]
By Sard's theorem, $O \to p''(O)$ is a submersion, so
\[ T_{s(Q)} p''(O) = p''_* T_{(s(Q),\Lambda,Q)} O \subset p''_* T_{(\Pi(Q),\Lambda,Q)} \Phi'''. \]
Thus the maps that we claimed were transverse are so.

\section{Appendix: Calculation of \texorpdfstring{$\Hom_\Mt(\II_k,\II_{k+j})$}{Hom(I\_k,I\_{k+j})}}
In this appendix we will show that if $k \ge 2n-1$ and $j \ge 0$, the natural map $\Gamma(\AA_j|_L) \to \Hom_\Mt(\II_k, \II_{k+j})$ is an isomorphism and $\Ext^i_\MM(\II_k, \II_{k+j}) = 0$ for $i = 1, 2$, and for $i = 3$ as well if $j \ge 2n-7$.

Let $\pi: \Mt \to L$, and for brevity let $\HH = \HH om(\II_{2n}, \II_{4n})$.  There is a natural map $\pi^*(\AA_{2n}|_L) \to \HH$.  We will show that $\pi_* \HH$ is a vector bundle, that $R^1 \pi_* \HH = 0$, and that the adjunct map $\AA_{2n}|_L \to \pi_* \HH$ is an isomorphism.  Over a smooth $Q \in L$ this is clear, since if $W$ and $W'$ are maximal isotropic subspaces of opposite families, it is well-known that $\Cl_\ev = \End_\CC(I_\ev) \oplus \End_\CC(I'_\ev)$, but to show it in general will take some work.

Let $U \subset L$ be an open set over which $\Pi$ is defined, as in \S\ref{four_quadrics}.

First we show that $\pi_* \HH$ is a vector bundle and that there is no higher pushforward.  Let $Q \in L$ be corank 2 and $\ell = \pi^{-1} Q$, which is naturally identified with the cone line of $Q$.  Let $J$ be the ideal of the $\PP^n$ spanned by $\Pi(Q)$ and the cone line.  Then $\OO_\ell \otimes J_\ev \subset \II_{2n}|_\ell$.  The map
\[ \OO_\ell(-1) \otimes \II_{2n}|_\ell \to \OO_\ell \otimes J_\odd \]
given by $v \otimes \xi \mapsto v\xi$ is surjective, and the kernel is $\OO_\ell(-1) \otimes J_\ev$.  Since there are no extensions of $\OO_\ell$ by $\OO_\ell(-1)$, we have
\[ \II_{2n}|_\ell = \OO_\ell \otimes J_\ev\ \oplus\ \OO_\ell(1) \otimes J_\odd. \]
Since $\II_{4n}|_\ell = \II_{2n}|_\ell$ and $\dim J_\ev = \dim J_\odd = 2^{n-2}$, we have $\HH|_\ell = (\OO_\ell(-1) \oplus \OO_\ell^2 \oplus \OO_\ell(1))^{2^{2n-4}}$.  Since $\dim H^0(\HH|_\ell) = 2^{2n-2} = \rank \HH$ and $H^1(\HH|_\ell) = 0$, the pushforward of $\HH$ from $\Mt$ to $\MM$ is a vector bundle and there is no higher pushforward.  Since $\MM \to L$ is flat and finite, the conclusion follows.

Let $\EE \subset \OO_U \otimes V$ be the vector bundle on $U$ whose fiber over $Q \in U$ is $\Pi(Q)$, and $\EE^\perp \subset \OO_U \otimes V$ the fiberwise orthogonal.  Because $\Pi(Q)$ avoids $Q_\sing$, $\EE^\perp$ is indeed a vector bundle.  Because $\Pi(Q)$ is isotropic, $\EE \subset \EE^\perp$.  Let $P$ be the $\PP^1$-bundle $\PP(\EE/\EE^\perp)$ and $\varpi: P \to U$ the natural map.  Then $P$ carries a natural quadratic form with values in $\varpi^* \OO_U(1)$, whose zero set is naturally isomorphic to $\pi^{-1}U \subset \Mt$.  From now on let us identify them, and not distinguish between $\pi$ and $\varpi$.

Let $\JJ \subset \AA|_U$ be the ideal generated by $\Wedge^{n-1} \EE \subset \AA_{n-1}|_U$.  Then $\II_{2n} \subset \pi^* \JJ_{2n}$.  Dualizing, $\pi^* \JJ_{2n}^* \twoheadrightarrow \II_{2n}^*$.  In this paragraph we will show that the adjunct map $\JJ_{2n}^* \to \pi_* \II_{2n}$ is an isomorphism.   Let $\OO_P(-1)$ be the relative tautological line bundle and consider the sequence maps on $P$
\[ \dotsb \to \OO_P(-1) \otimes \pi^*\JJ_{2n-1} \to \OO_P \otimes \pi^*\JJ_{2n} \to \OO_P(1) \otimes \pi^*\JJ_{2n+1} \to \dotsb \]
determined by $v \otimes \xi \mapsto v\xi$.  The composition of any two, for example
\begin{align*}
\OO_P \otimes \pi^*\JJ_{2n} \to&\ \OO_P(2) \otimes \pi^*\JJ_{2n+2} \\
&= \OO_P(2) \otimes \pi^*(\JJ_{2n} \otimes \OO_U(1)),
\end{align*}
is exactly the natural quadratic form mentioned above.  Thus its restriction to the open set of $\Mt$ is exact, and the image in $\pi^* \JJ_{2n}$ is exactly $\II_{2n}$.  After dualizing the sequence above, we can get an exact sequence
\[ 0 \to \OO_P(-1) \otimes \pi^*\JJ_{2n+1}^* \to \OO_P \otimes \pi^*\JJ_{2n}^* \to \II_{2n}^* \to 0. \]
Pushing this down to $U$, we find that indeed $\JJ_{2n}^* = \pi_* \II_{2n}^*$.

On $\pi^{-1}U \subset \Mt$ we have a commutative diagram
\begin{diagram}
\pi^*(\AA_{2n}|_U) & \rTo & \II_{2n}^* \otimes \II_{4n} \\
\dTo & & \dTo \\
\pi^*(\JJ_{2n}^* \otimes \JJ_{4n}) & \rTo & \II_{2n}^* \otimes \pi^*\JJ_{4n}.
\end{diagram}
On $U$, we have the adjunct
\begin{diagram}
\AA_{2n}|_U & \rTo & \pi_*(\II_{2n}^* \otimes \II_{4n}) \\
\dTo & & \dTo \\
\JJ_{2n}^* \otimes \JJ_{4n} & \rTo & \pi_*\II_{2n}^* \otimes \JJ_{4n}.
\end{diagram}
The right vertical map is injective on each fiber, and we have just seen that the bottom map is an isomorphism, so the top map is injective on each fiber.  Since $\AA_{2n}|_U$ and $\pi_*(\II_{2n}^* \otimes \II_{4n})$ have the same rank, it is an isomorphism, as claimed.

To finish, suppose that $k$ and $j$ are both even.  Then since $k \ge 2n$ and $\II_k = \II_{2n} \otimes \pi^*\OO_L(k/2 - n)$, we have \[ \HH om(\II_k, \II_{k+j}) = \HH om(\II_{2n},\II_{4n}) \otimes \pi^*\OO_L(j/2 - n). \]
Now $\pi_*$ of this is $\AA_{2n}(j/2 - n)|_L$, which has the same global sections as $\AA_j|_L$, no middle cohomology, and no top cohomology if $j/2-n \ge -3$.  The cases where $k$ is odd or $j$ is odd are entirely similar; the bound $j \ge 2n-7$ comes from the latter.

\bibliography{quadrics}
\bibliographystyle{plain}

\end{document}